\def\Eq#1#2{\ifthenelse{\equal{#1}{*}}
  {\begin{equation*}\begin{aligned}#2\end{aligned}\end{equation*}}
  {\begin{equation}\begin{aligned}\label{E#1}#2\end{aligned}\end{equation}}}
\newtheorem{thm}{Theorem}
\newtheorem{lem}[]{Lemma}
\newtheorem{prop}[]{Proposition}
\theoremstyle{remark}
\newtheorem{rem}[]{Remark}
\theoremstyle{definition}
\newtheorem{defin}[]{Definition}
\newcommand{\NN}{\mathbb{N}}
\newcommand{\ZZ}{\mathbb{Z}}
\newcommand{\QQ}{\mathbb{Q}}
\newcommand{\Xx}{X}
\newcommand{\dltu}{\mathcal{U}}
\newcommand{\dltj}{\mathbf{J}}
\newcommand{\dltp}{\mathcal{P}}
\newcommand{\dlta}{\mathcal{A}}
\newcommand{\dltb}{\mathcal{B}}
\newcommand{\dltc}{\mathcal{C}}
\newcommand{\dltd}{\mathcal{D}}
\newcommand{\dltt}{\mathcal{T}}
\newcommand{\dltf}{\mathcal{F}}
\newcommand{\dltm}{\mathcal{M}}
\author[Zolt\'an Boros]{Zolt\'an Boros}
\author[P\'eter T\'oth]{P\'eter T\'oth}
\title[Ultrapowers of ordered sets]{Interval 
chains and completeness in 
ultrapowers of ordered sets}
\address[Z.~Boros]{Institute of Mathematics, 
University of Debrecen,
4002 Debrecen, Pf.~400, Hungary}
\email{zboros@science.unideb.hu}
\address[P.~T\'oth]{Institute of Mathematics, 
University of Debrecen,
4002 Debrecen, Pf.~400, Hungary}
\email{peter.toth042@gmail.com}
\keywords{ordered sets, interval chains,
Cantor's property, completeness, 
ultrafilter, ultrapower} 
\subjclass[2020]{06A05,26E30}
\thanks{Research of Z.~Boros 
has been supported 
by the K-134191 NKFIH Grant 
and the 2019-2.1.11-T\'ET-2019-00049 project. 
Projects no. 2019-2.1.11-TÉT-2019-00049 
and K134191 have been implemented with the support 
provided from the 
National Research, Development and Innovation Fund 
of Hungary, financed under the 
TÉT and K~20 funding schemes, respectively.\\ 
Research of P.~T\'oth has been supported by
the EFOP-3.6.1-16-2016-00022 project. 
The project was co-financed by the European Union 
and the European Social Fund.} 
\begin{document}

\begin{abstract}
The ultrapower $ T^{\ast} $ 
of an arbitrary ordered set $T$ 
is introduced as an infinitesimal 
extension of $ T \,$. 
It is obtained as the set of 
equivalence classes of the sequences in $ T \,$, 
where the corresponding relation 
is generated by an ultrafilter 
on the set of natural numbers. 
It is established that $T^\ast$ always satisfies 
Cantor's property, while one can give 
the necessary and sufficient conditions for $T$ 
so that $T^\ast$ would be complete or it 
would fulfill the open completeness property, 
respectively.   
Namely, the density of 
the original set determines the 
open completeness of the extension, while independently, 
the completeness of $T^\ast$ is determined 
by the cardinality of $ T \,$.  
 
\end{abstract}

\maketitle

\section{Introduction}

A well known statement from the theory of ordered fields 
is that an ordered field is complete if and only if it 
simultaneously fulfills the Archimedean property and 
Cantor's property. To demonstrate the independence of 
these properties, one needs to construct an ordered field 
which fulfills Cantor's property but is not complete. 
This question is usually treated in the framework of 
non-standard analysis, for instance in the works of 
Stroyan and Luxemburg \cite{SL76}, \cite{Lux73}.  

However in the 
above cited publications 
one can also find 
an idea for a construction that needs only standard tools. 
This idea is the concept of ultrapowers: 
Let us choose an adequate family of subsets 
of the set of natural numbers 
called ultrafilter,  
and then use it 
to define an equivalence relation on the set 
of all sequences of the elements of a given set $R$. 
This provides a partition, and the 
set of the equivalence classes is called the ultrapower of $R$. 
After introducing this concept, the authors proceed using 
mainly non-standard techniques, also when it comes to show
the properties of the ultrapower. 
We note 
that the result of this method is an extension 
of the original set, since the classes of 
the constant sequences can be considered 
as representatives of the original elements. 

The aforementioned works contain only the main ideas 
without the technical parts, 
but in the recent publication \cite{Cor16} of Corazza, 
a detailed construction of the ultrapower of $\QQ$ 
(denoted by $\QQ ^ \NN / U$) is displayed. 
One of the main objectives of his work is 
to construct a non-archimedean 
ordeded field that fulfills Cantor's property 
(in \cite{Cor16} it is referred as Nested Intervals Property). 
As mentioned above, such a construction provides 
an ordered field with the Cantor property which is 
not complete. 

Not surprisingly, only the ordering of the ultrapower 
plays an important role during the investigation of these 
two order-related properties: 
completeness and the Cantor property, 
while the field operations are irrelevant at that point. 
Actually this fact motivates us 
to generalize these constructions, 
introducing the ultrapower of 
an arbitrary ordered set $T$, 
and investigating Cantor's property 
and completeness in its extension $T^\ast$. 

We may note that while ultrafilters play central role 
in all of the constructions cited before, 
the definition of them is 
not completely coherent. 
In fact the ultrafilter has to fulfill 
some conditions which ensure that the extension is proper 
(not trivial), 
but these conditions do not appear 
in the classical definition of 
the ultrafilter, e.g. in the monograph of Jech \cite{Jch03}. 
Hence in \cite{Lux73} and \cite{SL76} a so-called 
free ultrafilter is used, 
while in \cite{Cor16} a nonprincipal ultrafilter is used. 
The main inconvenience with these 
special ultrafilters is to prove their existence -- 
typically it is done by using Zorn's lemma.  
Therefore we find it useful to revisit this question, 
and define the concept of an ultrafilter in such a way 
that it would be suitable for our 
construction, and its existence would follow 
relatively easily from Tarski's classical 
existence theorem for 'ordinary' ultrafilters 
(which may be found in \cite[Theorem 7.5]{Jch03}). 

Our process of showing Cantor's property for the ultrapower 
sometimes resembles Corazza's methods, although at one point 
the fact that we start from an arbitrary ordered set makes a 
significant difference. 
Namely, in \cite{Cor16} it is shown that in 
$\QQ ^ \NN / U$ the 
intersection of a chain of countably many open intervals is 
nonempty. This property is referred to as open completeness, 
and it clearly implies both Cantor's property and 
the lack of completeness. 
In our more abstract setting it is reasonable 
to investigate these properties separately.  
Namely, starting from an ordered set $ T \,$, 
we shall prove that $T^\ast$ always satisfies 
Cantor's property, while we can give 
the necessary and sufficient conditions for $T$ 
so that $T^\ast$ would be complete or it 
would fulfill the open completeness property, 
respectively.   
Namely, the density of 
the original set determines the 
open completeness of the extension, while independently, 
the completeness of $T^\ast$ is determined 
by the cardinality of $ T \,$.

\section{Particular properties of ordered sets}

In this section we collect the basic concepts 
for ordered sets that are in the focus of this paper. 

As usual, we call a nonempty set $\Xx$ 
equipped with a relation $ \leq $ (on $\Xx$) 
an ordered set if the relation $ \leq $ 
is reflexive, anti-symmetric, transitive, 
and linear (i.e., $ x \leq y $ or $ y \leq x $ 
for all $ x,y \in X $). 

Once the relation $ \leq $ on $\Xx$ is given, we 
shall also use the relations 
$ \geq \,$, $ < $ and $ > $ in the usual sense. 

We shall use the concepts of {\em lower/upper bound}, 
{\em minimum/maximum} 
(denoted by $ \mathrm{min} $ and $ \mathrm{max} \,$, 
respectively), 
a set being {\em bounded from below/above}, 
{\em least upper bound} ($\mathrm{sup}$) and 
{\em greatest lower bound} ($\mathrm{inf}$) 
in the usual sense as well 
(cf. \cite[Definition 2.2]{Jch03}). 

\begin{defin}
An ordered set $(\Xx, \leq)$ is called {\em complete} 
if every nonempty subset of $\Xx$, that is bounded from above, 
has a least upper bound.
\end{defin}

It is well known that the ordered set $(\Xx, \leq)$ 
is complete if, and only if, 
every nonempty subset of $\Xx$, 
that is bounded from below, 
has a greatest lower bound 
(a proof in an abstract setting can be found, 
for instance, in \cite[Theorem 4.6]{BSz08}). 

We will define {\em intervals} 
as particular subsets of an ordered set 
$(\Xx, \leq)$ in the usual way. 
For example, if $ a,b \in X $ such that $ a < b \,$, 
let  
$ [a,b[ = \{\, x \in X \,:\,  a \leq x < b \,\} $.  

We call a sequence $(I_n)$ of non-empty intervals an 
{\em interval chain} if $ I_{n+1} \subset I_n $ 
for every $ n \in \NN \,$.  
We can describe Cantor's property and 
the open completeness of an ordered set $X$ 
by the phenomena that the intersection of an arbitrary 
interval chain of closed, respectively, open intervals 
is non-empty.  

\begin{defin} \label{DCantor}  
We say that an ordered set $X$ satisfies 
{\em Cantor's property} if 
\[
\bigcap_{n \in \NN} [a_n , b_n] \neq \emptyset 
\]
for any sequences $ (a_n),(b_n) : \NN \rightarrow X $ 
fulfilling 
\[ 
a_k \leq a_{k+1} \leq b_{k+1} \leq b_k   
\] 
for every $ k \in \NN \,$. 
\end{defin}

\begin{defin} \label{Dopencomplete}  
We say that an ordered set $X$ is {\em open complete} if 
\[
\bigcap_{n \in \NN} \, ]a_n , b_n[ \, \neq \, \emptyset 
\]
for any sequences $ (a_n),(b_n) : \NN \rightarrow X $ 
fulfilling 
\[ 
a_k \leq a_{k+1} < b_{k+1} \leq b_k   
\] 
for every $ k \in \NN \,$. 
\end{defin} 

Finally, we introduce the concept of density in ordered sets. 

\begin{defin} \label{Ddensity}
We say that an ordered set $X$ is {\em dense everywhere} 
if, for any $a,b \in X$ fulfilling $a < b$, 
there exists $c \in X$ such that $a < c <b$. 
\end{defin}

\section{An extension of ordered sets}

\subsection{Ultrafilter}

We introduce the concept of ultrafilter. 
For the power set of an arbitrary set $\Xx$ we will use the 
notation $\mathcal{P}(\Xx)$, i.e. the elements of 
$\mathcal{P}(\Xx)$ are the subsets of $\Xx$. 

\begin{defin} \label{ultdef}
Let $\dltj$ be an infinite set. 
The nonempty family of sets 
$\mathcal{U} \subset \mathcal{P}(\dltj)$ 
is called a {\em filter} on $\dltj$, if
\begin{itemize}
\item[(1)] 
$K \in \mathcal{U}$ and $K \subset L\subset \dltj$ 
implies $L\in \dltu $, 
\item[(2)] 
$K,L \in \dltu $ implies $K\cap L \in \dltu$,
\item[(3)] 
$K \in \dltu$ implies that $K$ is infinite. 
\end{itemize}
Moreover, 
$\dltu$ is called an {\em ultrafilter} 
if it is a filter 
and 
\begin{itemize}
\item[(4)] if $K \subset \dltj$, then 
$K \in \dltu$ or $\dltj \setminus K \in \dltu$ holds.
\end{itemize}
\end{defin}

\begin{rem} In many works (such as \cite{Lux73} or 
\cite{Jch03}) 
filters and ultrafilters are defined on arbitrary 
sets and not particularly infinite ones. In that general case 
assumption {\em(3)} is replaced by 
the weaker condition 
\begin{itemize}
\item[(3')]$\emptyset \notin \dltu$. 
\end{itemize}
In that weaker sense it holds that any filter can be extended 
to an ultrafilter (see \cite[Theorem 7.5]{Jch03}). 
Using this result we prove the following statement. 

\begin{thm}\label{ultfiltthm}
Let $\dltj$ be an infinite set and let 
$K \subset \dltj$ be also infinite. Then there exists an 
ultrafilter $\dltu \subset \dltp (\dltj)$ such that 
$K \in \dltu$. 
\end{thm}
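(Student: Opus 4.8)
The plan is to reduce this to the classical extension theorem \cite[Theorem 7.5]{Jch03}, which produces ultrafilters only in the weaker sense (satisfying (1), (2), (3') and (4)). The idea is to build a starting filter (in that weaker sense) that contains not only $K$ but also every cofinite subset of $\dltj$, and then to argue that any ultrafilter extending such a filter automatically satisfies our stronger axiom (3), that every member is infinite.

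First I would introduce the family
\[
\dltf \, = \, \{\, L \subset \dltj \,:\, K \setminus L \text{ is finite} \,\} .
\]
I claim that $\dltf$ is a filter in the weaker sense, i.e. it satisfies (1), (2) and (3'). Upward closure (1) is immediate, since enlarging $L$ can only shrink $K \setminus L$; closure under intersection (2) follows from the identity $K \setminus (L \cap M) = (K \setminus L) \cup (K \setminus M)$; and (3') holds because $K \setminus \emptyset = K$ is infinite by hypothesis, so $\emptyset \notin \dltf$. I would also record two memberships that will be used below: $K \in \dltf$ (as $K \setminus K = \emptyset$), and every cofinite subset $C \subset \dltj$ lies in $\dltf$ (since $K \setminus C \subset \dltj \setminus C$ is finite).

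Next I would apply the cited theorem to extend $\dltf$ to an ultrafilter $\dltu \subset \dltp(\dltj)$ in the weaker sense, so that $\dltu$ satisfies (1), (2), (3') and (4) with $\dltf \subset \dltu$. In particular $K \in \dltf \subset \dltu$, giving the required membership. It then remains to verify the strong axiom (3). Suppose, to the contrary, that some $M \in \dltu$ is finite. Then $\dltj \setminus M$ is cofinite, hence $\dltj \setminus M \in \dltf \subset \dltu$; by (2) we obtain $M \cap (\dltj \setminus M) = \emptyset \in \dltu$, contradicting (3'). Therefore no finite set belongs to $\dltu$, so (3) holds and $\dltu$ is an ultrafilter in the sense of Definition \ref{ultdef} containing $K$.

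The only point requiring care — and the conceptual heart of the argument — is the decision to pack \emph{every} cofinite set into the starting filter $\dltf$ rather than merely the principal filter generated by $K$. This is precisely what upgrades the weak axiom (3') to our strong axiom (3) after extension, and it is also the step where the hypothesis that $K$ is infinite is genuinely needed, namely to keep $\dltf$ proper. Everything else is a routine verification of the filter axioms.
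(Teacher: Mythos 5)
Your proposal is correct and follows essentially the same route as the paper: your starting filter $\{\,L \subset \dltj : K \setminus L \text{ is finite}\,\}$ is exactly the family $\dltm$ the paper builds (the filter generated by $K$ together with the Fr\'echet filter), and the final upgrade from (3') to (3) via cofinite sets is the same argument. No substantive differences.
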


\begin{proof}
Let us define the so-called Fréchet-filter: 
\[
\dltf = \{ S \subset \dltj \ \vert \ \dltj \setminus S 
\mbox{ is finite.} \}
\]
It is easy to see that $\dltf$ is indeed a filter. Let us 
define another subset of $\dltp (\dltj)$: 
\[
\dltm = \{ M \subset \dltj \ \vert \ \exists L \in \dltf 
\ : \ K \cap L \subset M \}.
\]
Now we show that $\dltm$ is a filter (in the weaker sense). 
Let $M,N$ be arbitrary sets in $\dltm$, thus there exist 
$L_M \, ,  L_N \in \dltf$ such that 
$K \cap L_M \subset M$ and 
$K \cap L_N \subset N$. 

\begin{itemize}
\item[(1)] If $M \subset S$ then $K \cap L_M \subset M \subset S$, 
so $S \in \dltm$. 

\item[(2)] $K \cap (L_M \cap L_N) = 
(K \cap L_M) \cap (K \cap L_N) \subset M \cap N$, 
and as $L_M \cap L_N \in \dltf$, it also holds that
$M \cap N \in \dltm$.

\item[(3')] Assume $\emptyset \in \dltm$, which means 
$K \cap L = \emptyset$ for some $L \in \dltf$. But this 
would imply $K \subset \dltj \setminus L$ and that is impossible, 
since $K$ is infinite and $\dltj \setminus L$ is finite. 
\end{itemize}

Notice that $\dltf \subset \dltm$ trivially holds. Now 
$\dltm$ can be extended to an ultrafilter $\dltu$ 
(again in the weaker sense). However $\dltu$ is an ultrafilter 
in our restrictive sense, too. 
Indeed, if $F \in \dltu$ for some 
finite subset $F$, then $\dltj \setminus F \notin \dltu$ 
which contradicts $\dltj \setminus F \in \dltf$. 
\end{proof}

As a corollary of this statement, we get that there
exists an ultrafilter on the set of natural numbers. 
Finally we emphasize that the existence of an appropriate 
ultrafilter on $\NN$ can be proven in several, 
slightly different ways. 
However this often requires the introduction of further 
definitions such as {\em free ultrafilter} (in \cite{Lux73}) 
or {\em nonprincipal ultrafilter} (in \cite{Cor16}). 
\end{rem} 

\subsection{Ultrapower of an ordered set}

In the next step we construct a so called 
{\em ultrapower} of any ordered set $T$. 
The existence of an ultrafilter on the set of natural numbers
provides us a way to define an equivalence relation 
on the set of all sequences of elements of $T$, 
in such manner that an adequate order 
on the equivalence classes 
would generate an ordered set. 
As it is common in the literature, 
we will use an asterisk ($^\ast$) to denote 
the operation that assigns its ultrapower to the 
original ordered set. 

In the subsequent sections let $T$ be an ordered set and 
$\dltu$ be an ultrafilter on $\NN$.

Let 
$\dltt = 
\{(a_n) \ \vert \ (a_n) : \mathbb{N} \rightarrow T \} $ 
denote the set of all sequences of elements of $T$. 

\begin{prop}
Let us define the relation 
$\sim \ \subset \dltt \times \dltt$ in the 
following way: 
\[
(a_n) \sim (b_n) \Longleftrightarrow 
\{ n \in \mathbb{N} : a_n = b_n \} \in \dltu.
\]
Then $\sim$ is an equivalence relation. 
Furthermore, let us 
denote the set of the equivalence classes by $T^\ast$, 
while the class of an element $(a_n) \in \dltt$ 
be denoted by $ \overline{(a_n)} \,$. 
The relation $\leq \ \subset T^\ast \times T^\ast$, given by 
\[
\overline{(a_n)} \leq \overline{(b_n)} \Longleftrightarrow 
\{ n \in \mathbb{N} : a_n \leq b_n \}\in \dltu \,, 
\]
is well-defined, and $(T^\ast, \leq)$ is an ordered set. 
\end{prop}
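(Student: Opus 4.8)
The plan is to verify the three assertions in the order stated: first that $\sim$ is an equivalence relation, then that $\leq$ on $T^\ast$ does not depend on the chosen representatives, and finally that the four order axioms hold. Throughout, the workhorse is the observation that every relevant assertion is encoded by a subset of $\NN$ lying in $\dltu$, so each step reduces to manipulating such ``large'' index sets by means of the filter axioms (1)--(2) and, for linearity, the distinguishing ultrafilter axiom (4). I would first record the elementary fact that $\NN \in \dltu$: since $\dltu$ is nonempty it contains some $K$, and $K \subset \NN$ forces $\NN \in \dltu$ by (1).

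For the equivalence relation, reflexivity follows because $\{\, n : a_n = a_n \,\} = \NN \in \dltu$, and symmetry is immediate since the index set $\{\, n : a_n = b_n \,\}$ is unchanged when the two sequences are swapped. Transitivity is the only place needing the filter structure: if $\{\, n : a_n = b_n \,\}$ and $\{\, n : b_n = c_n \,\}$ both lie in $\dltu$, their intersection does too by (2), and this intersection is contained in $\{\, n : a_n = c_n \,\}$, so (1) places the latter in $\dltu$.

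The step I expect to require the most care is well-definedness. Given $(a_n) \sim (a'_n)$ and $(b_n) \sim (b'_n)$, I would set $A = \{\, n : a_n = a'_n \,\}$ and $B = \{\, n : b_n = b'_n \,\}$; both lie in $\dltu$, hence so does $A \cap B$ by (2). On $A \cap B$ the conditions $a_n \leq b_n$ and $a'_n \leq b'_n$ coincide, so $\{\, n : a_n \leq b_n \,\} \cap (A \cap B) = \{\, n : a'_n \leq b'_n \,\} \cap (A \cap B)$. Thus if $\{\, n : a_n \leq b_n \,\} \in \dltu$, intersecting with $A \cap B$ via (2) and then enlarging to $\{\, n : a'_n \leq b'_n \,\}$ via (1) shows the latter is in $\dltu$; the reverse implication is symmetric.

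Finally I would verify the order axioms. Reflexivity uses $\{\, n : a_n \leq a_n \,\} = \NN \in \dltu$; transitivity mirrors the argument above, since $\{\, n : a_n \leq b_n \,\} \cap \{\, n : b_n \leq c_n \,\} \subset \{\, n : a_n \leq c_n \,\}$; and anti-symmetry follows because on the intersection of the two large sets witnessing $\overline{(a_n)} \leq \overline{(b_n)}$ and $\overline{(b_n)} \leq \overline{(a_n)}$ one has $a_n = b_n$ by anti-symmetry in $T$, whence $(a_n) \sim (b_n)$, i.e. $\overline{(a_n)} = \overline{(b_n)}$. Linearity is precisely where axiom (4) enters: for $K = \{\, n : a_n \leq b_n \,\}$ either $K \in \dltu$, giving $\overline{(a_n)} \leq \overline{(b_n)}$, or $\NN \setminus K \in \dltu$; since the linearity of $\leq$ on $T$ gives $\NN \setminus K \subset \{\, n : b_n \leq a_n \,\}$, axiom (1) then yields $\overline{(b_n)} \leq \overline{(a_n)}$.
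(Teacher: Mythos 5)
Your proposal is correct and follows essentially the same route as the paper: transitivity, well-definedness, and anti-symmetry all come from intersecting two or three sets of $\dltu$ and enlarging via axiom (1), while linearity invokes the ultrafilter axiom (4) on $\{\, n : a_n \leq b_n \,\}$ and its complement. The only (welcome) additions are your explicit remark that $\NN \in \dltu$ and your spelling out of both directions of the well-definedness equivalence, which the paper leaves to symmetry.
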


\begin{proof}
The reflexivity and symmetry of $\sim$ is obvious. 
To check the transitivity, assume $(a_n) \sim (b_n)$ 
and $(b_n) \sim (c_n)$. Then 
\[
\{ n \in \NN : a_n = c_n \} \supset
\{ n \in \NN : a_n = b_n \} \cap 
\{ n \in \NN : b_n = c_n \} \in \dltu 
\hspace{2cm} (\star)
\]
implies $(a_n) \sim (c_n)$, so $\sim$ is indeed an 
equivalence relation. Similarly, if 
$(a_n) \sim (\tilde{a}_n) \, $, $(b_n) \sim (\tilde{b}_n)$ 
and $\overline{(a_n)} \leq \overline{(b_n)}$, then 
\begin{align*}
\{n \in \NN : \tilde{a}_n \leq \tilde{b}_n \} \supset  
\{n \in \NN : a_n \leq b_n \} \cap 
\{n \in \NN : a_n = \tilde{a}_n \} \cap 
\{n \in \NN : b_n = \tilde{b}_n \} \in \dltu 
\end{align*} 
ensures that 
$\overline{(\tilde{a}_n)} \leq \overline{(\tilde{b}_n)}$, 
hence $\leq$ is independent of the choice of 
representatives, i.e. it is a well-defined relation on 
$T^\ast$.

Clearly $\leq$ is reflexive, and also notice that if we 
replace the equalities with inequalities in $(\star)$, 
we get the transitivity of $\leq$. Furthermore, 
\[
\{ n \in \NN : a_n = b_n \} \supset
\{ n \in \NN : a_n \leq b_n \} \cap 
\{ n \in \NN : b_n \leq a_n \} \in \dltu
\]
shows that $\leq$ is antisymmetric. Finally, since the sets 
$\{n \in \NN : a_n \leq b_n \}$ and $\{ a_n > b_n \}$ give 
a disjoint partition of $\NN$, exactly one of them is in 
$\dltu$. These properties together provide that 
$(T^\ast, \leq)$ is an ordered set. 
\end{proof}


\subsection{Cantor's property for the extension}

In this section we will show that 
the operation $ ^\ast $ always produces 
an ordered set that satisfies Cantor's property. 

\begin{thm} \label{mainthrm}
If $T$ is an ordered set then its extension $ T^{\ast} $ satisfies Cantor's property, i.e. if 
$ a_k = ((a_k)_n) \in \dltt $ and $ b_k = ((b_k)_n) \in \dltt $ $(k \in \NN)$ 
such that for every $ k \in \NN $ 
\[ 
\overline{a_k} \leq \overline{a_{k+1}} \leq \overline{b_{k+1}} \leq \overline{b_k} \,, 
\] 
then
\[ 
\bigcap_{k \in \NN} \left[ \overline{a_k} \,,\, \overline{b_k} \right] 
\neq \emptyset \,.
\]
\end{thm}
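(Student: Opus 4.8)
The plan is to produce a single sequence $ c = (c_n) \in \dltt $ whose class $ \overline{c} $ lies in \emph{every} interval $ \left[ \overline{a_k}, \overline{b_k} \right] $, and to build it by a diagonal construction over the index $ n $. The reason a direct approach fails is that $ \dltu $ is closed only under finite intersections, so one cannot hope to find one fixed index set in $ \dltu $ on which all the inequalities hold at once; the diagonalization is precisely what replaces that forbidden infinite intersection.

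First I would translate each order relation into a membership statement. For every $ k $ the hypothesis gives that the index sets $ \{ n : (a_k)_n \leq (a_{k+1})_n \} $, $ \{ n : (a_k)_n \leq (b_k)_n \} $ and $ \{ n : (b_{k+1})_n \leq (b_k)_n \} $ all belong to $ \dltu $. For each $ k $ I would then set
\[
G_k = \bigcap_{i=1}^{k} \Bigl( \{ n : (a_i)_n \leq (a_{i+1})_n \} \cap \{ n : (a_i)_n \leq (b_i)_n \} \cap \{ n : (b_{i+1})_n \leq (b_i)_n \} \Bigr),
\]
which is a \emph{finite} intersection of members of $ \dltu $ and hence lies in $ \dltu $ by property (2). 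The sequence $ (G_k) $ is decreasing, and by linearity of $ \leq $ in $ T $, for every $ n \in G_k $ and every $ i \leq k $ one reads off the nested chain $ (a_i)_n \leq (a_k)_n \leq (b_k)_n \leq (b_i)_n $ in $ T $.

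Next I would define the diagonal sequence. For each $ n $ let $ k(n) = \max \{ k \leq n : n \in G_k \} $ (with the convention $ G_0 = \NN $, so $ k(n) $ is always defined), and put $ c_n = (a_{k(n)})_n $ when $ k(n) \geq 1 $ and $ c_n = (a_1)_n $ otherwise. Capping $ k $ at $ n $ keeps $ k(n) $ well defined for every $ n $ while still letting it grow along the index set.

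Finally I would verify that $ \overline{a_k} \leq \overline{c} \leq \overline{b_k} $ for each fixed $ k $. \textbf{This is the step carrying the real weight.} The key observation is that if $ n \in G_k $ and $ n \geq k $, then $ m := k(n) \geq k $, so $ n \in G_m \subseteq G_k $ and $ c_n = (a_m)_n $; applying the nested chain on $ G_m $ with $ i = k \leq m $ yields $ (a_k)_n \leq (a_m)_n = c_n \leq (b_k)_n $. Hence $ \{ n : (a_k)_n \leq c_n \leq (b_k)_n \} $ contains $ G_k \setminus \{1, \dots, k-1\} $. Now a finite set cannot lie in $ \dltu $ by property (3), so property (4) forces the cofinite set $ \NN \setminus \{1, \dots, k-1\} $ into $ \dltu $; intersecting with $ G_k $ (property (2)) and enlarging (property (1)) gives $ \{ n : (a_k)_n \leq c_n \leq (b_k)_n \} \in \dltu $, i.e. $ \overline{c} \in \left[ \overline{a_k}, \overline{b_k} \right] $. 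Since $ k $ was arbitrary, $ \overline{c} \in \bigcap_{k \in \NN} \left[ \overline{a_k}, \overline{b_k} \right] $, so the intersection is nonempty.
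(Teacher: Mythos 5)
Your proof is correct and follows essentially the same route as the paper's: your sets $G_k$ play the role of the paper's $\mathcal{D}_k$, your diagonal index $k(n)$ is the paper's $\alpha_n$ (with the cap at $n$ built into the definition of the maximum rather than handled by a case split), and the verification on $G_k$ minus an initial segment matches the paper's argument on $\mathcal{D}_k \setminus \{m : m < k\}$. The only nitpick is that the nested chain on $G_k$ follows from transitivity of $\leq$ in $T$, not linearity.
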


\begin{proof}

We define the following sets:

\begin{gather*}
A_i = \{n \in \NN : (a_i)_n \leq (a_{i+1})_n \}, \hspace{1cm}
B_i = \{n \in \NN : (b_i)_n \geq (b_{i+1})_n \}, \\ 
\mbox{ and } C_i=\{ n \in \NN : (a_i)_n \leq (b_i)_n\} 
\hspace{5mm} \mbox{for every } i \in \NN.
\end{gather*}
 
Using these we construct the following sets: 

\[
\dlta_k=\bigcap_{i=1}^{k-1} A_i, \mbox{\hspace{10mm} } 
\dltb_k=\bigcap_{i=1}^{k-1} B_i, \mbox{\hspace{10mm} } 
\dltc_k=\bigcap_{i=1}^{k} C_i \mbox{\hspace{10mm}} 
(k\in\mathbb{N} \setminus \{ 1 \} ).
\]

Obviously $\dlta_k$, $\dltb_k$, $\dltc_k$ belong to $\dltu$, as they are intersections of finitely many sets from $\dltu$. 
For the same reason 
$ \dlta_k \cap \dltb_k \cap \dltc_k = \dltd_k \in \dltu $. 

Let $ \dltd_1 = \dltc_1 = C_1 \,$, 
thus the set $\dltd_k$ is now defined 
for every $k \in \NN$, and it consists of 
the natural numbers $n$ for which 
the following inequalities hold: 
\[
(a_1)_n \leq \hdots \leq (a_k)_n \leq 
(b_k)_n \leq \hdots \leq (b_1)_n.
\]

In the next step, for every $ n \in \NN \,$, 
we define another 
set of natural numbers $I_n$ as follows: 
$I_n=\{k\in\mathbb{N}: n\in \dltd_k\}$.
It is easy to see from the definition of the sets $\dltd_k$ 
that if $k \in I_n$ then $l \in I_n$ is 
also true for every natural number $l \leq k$.
Using these sets we assign a non-negative integer 
to every  $ n \in \NN $ as follows: let 
\[
\alpha_n=
\begin{cases}
0, & \text{if } I_n = \emptyset \,,\\
n, &  \text{if } I_n \text{ has no upper bound},\\
\min \{ n, \max I_n \} &  \text{if } I_n 
\text{ is non-empty and bounded from above.}
\end{cases}
\] 
We should note that if $n$ is an element of $\dltd_k$ 
and $k \leq n$ then 
$ \alpha_n \geq k $ ($k,n \in \NN$ ). 
This also means that $n \notin \dltd_1$ holds 
if and only if $I_n = \emptyset$.
Hence 
$n \in \dltd_k \setminus \{ m\in  \NN : m < k \}$  
implies 
\[ 
(a_1)_n\leq \hdots \leq (a_k)_n \leq (a_{\alpha_n})_n \leq
(b_{\alpha_n})_n \leq (b_k)_n \leq \hdots \leq (b_1)_n.
\]
It is trivial that 
$ \dltd_k \setminus \{ m\in  \NN : m < k \}$ 
is in the ultrafilter.

After these remarks it is rather easy to construct a 
common point of the interval chain. We define the sequence 
$c = (c_n) : \mathbb{N}\longrightarrow\mathbb{R}$ as follows:

\[
c_n = 
\begin{cases}
(a_{\alpha_n})_n, & \text{if } n \in \dltd_1\\
(a_1)_1, & \text{if } n \notin \dltd_1.
\end{cases}
\]

We will show that 
$ \overline{a_k} \leq \overline{c} \leq \overline{b_k} $ 
for any $k \in \NN$. 
This is a straightforward
corollary of our previous remark, namely that 
\[
(a_1)_n\leq \hdots \leq (a_k)_n \leq (a_{\alpha_n})_n 
= c_n \leq (b_k)_n \leq \hdots \leq (b_1)_n 
\]
holds if 
$n \in \dltd_k \setminus \{ m\in  \NN : m < k \}$. 
Since 
$\dltd_k \setminus \{ m\in  \NN : m < k \} \in \dltu$, 
the sets 
\[
 \{ n \in \NN : (a_k)_n  \leq c_n \} \ \mbox{ and } \ 
\{ n \in \NN : c_n \leq (b_k)_n \} 
\]
are also elements of $\dltu$ 
(obviously they are supersets of 
$\dltd_k \setminus \{ m\in  \NN : m < k \} $). 

The final step of the proof is to use the definition of the 
ordering relation on $T^\ast$, so we obtain 
\[
\overline{c} \in 
\bigcap_{k\in\mathbb{N}} [\overline{a_k},\overline{b_k}]
\]
\end{proof}

\begin{rem} \label{Rem01}
It seems reasonable to make a similar proposition 
and replace the closed intervals by open intervals. 
However if we do so then we must 
require the $T$ ordered set to be dense everywhere. 
Otherwise a trivial counterexample can be made 
as an empty open interval exists.

On the other hand, the criterion concerning the density of $T$ 
is sufficient to prove the alternate form of 
the previous theorem (i.e. open completeness). 
We sum up these perceptions in the following theorem: 
\end{rem}

\begin{thm}
Let $T$ be an ordered set. 
The following statements are equivalent:
\begin{itemize}
\item[(a)] $T$ is dense everywhere.
\item[(b)] 
if 
$ a_k = ((a_k)_n) \in \dltt $ and 
$ b_k = ((b_k)_n) \in \dltt $ $(k \in \NN)$ 
such that for every $ k \in \NN $ 
\[ 
\overline{a_k} \leq \overline{a_{k+1}} < 
\overline{b_{k+1}} \leq \overline{b_k} \,, 
\] 
then
\[ 
\bigcap_{k \in \NN} 
\left] \overline{a_k} \,,\, \overline{b_k} \right[ 
\neq \emptyset \,.
\]
\end{itemize}
\end{thm}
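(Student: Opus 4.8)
The plan is to prove the two implications separately. The implication (b) $\Rightarrow$ (a) is the short direction and I would argue it by contraposition; the implication (a) $\Rightarrow$ (b) carries the real content and is obtained by reusing, almost verbatim, the construction in the proof of Theorem~\ref{mainthrm}, with density invoked only at the very last step.

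For (b) $\Rightarrow$ (a) I argue contrapositively. Suppose $T$ is not dense everywhere, so there are $a,b \in T$ with $a < b$ and no $c \in T$ satisfying $a < c < b$. I would take the constant chain $a_k = (a,a,\dots)$ and $b_k = (b,b,\dots)$ for every $k \in \NN$; writing $\overline{a}$, $\overline{b}$ for the classes of these constant sequences, we have $\overline{a_k} = \overline{a}$ and $\overline{b_k} = \overline{b}$, so $\overline{a_k} \le \overline{a_{k+1}} < \overline{b_{k+1}} \le \overline{b_k}$ holds because $\overline{a} < \overline{b}$. It then remains to check that the open interval $\,]\overline{a},\overline{b}[\,$ is empty in $T^\ast$: if $\overline{(x_n)}$ satisfied $\overline{a} < \overline{(x_n)} < \overline{b}$, then both $\{n : a < x_n\}$ and $\{n : x_n < b\}$ would lie in $\dltu$, hence so would their intersection $\{n : a < x_n < b\}$; but this set is empty since $\,]a,b[\,$ is empty in $T$, contradicting $\emptyset \notin \dltu$. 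Thus $\bigcap_k \,]\overline{a_k},\overline{b_k}[\, = \,]\overline{a},\overline{b}[\, = \emptyset$, so (b) fails.

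For (a) $\Rightarrow$ (b) I assume $T$ dense everywhere and keep every set built in the proof of Theorem~\ref{mainthrm} ($A_i$, $B_i$, $\dlta_k$, $\dltb_k$, $\dltd_k$, the index sets $I_n$, and the integers $\alpha_n$), with a single change: I redefine $C_i = \{ n \in \NN : (a_i)_n < (b_i)_n \}$ using the \emph{strict} inequality. Since the hypothesis gives $\overline{a_i} \le \overline{a_{i+1}} < \overline{b_{i+1}} \le \overline{b_i}$, hence $\overline{a_i} < \overline{b_i}$, one checks $C_i \in \dltu$: indeed $\{n : (a_i)_n \le (b_i)_n\} \in \dltu$ while $\{n : (a_i)_n = (b_i)_n\} \notin \dltu$, so the ultrafilter property forces $\{n : (a_i)_n \ne (b_i)_n\} \in \dltu$, and intersecting yields $C_i$. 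Consequently $\dltd_k \in \dltu$ exactly as before. The only genuinely new step is the choice of the common point: instead of setting $c_n = (a_{\alpha_n})_n$, I would use density of $T$ to pick, whenever $n \in \dltd_1$, a point $c_n \in T$ with $(a_{\alpha_n})_n < c_n < (b_{\alpha_n})_n$, and set $c_n = (a_1)_1$ otherwise.

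The main obstacle is to justify that this choice is legitimate, i.e. that $(a_{\alpha_n})_n < (b_{\alpha_n})_n$ strictly whenever $n \in \dltd_1$; this is precisely what upgrades the closed-interval argument to the open one. For this I would verify that $n \in \dltd_{\alpha_n}$ in every branch of the definition of $\alpha_n$: when $n \in \dltd_1$ we have $I_n \ne \emptyset$, so $\alpha_n \ge 1$, and since $I_n$ is downward closed the value $\alpha_n$ always lies in $I_n$, which by definition of $I_n$ means $n \in \dltd_{\alpha_n}$. Because $\dltd_{\alpha_n} \subseteq \dltc_{\alpha_n} \subseteq C_{\alpha_n}$, this gives $(a_{\alpha_n})_n < (b_{\alpha_n})_n$, and density applies. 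Then, exactly as in Theorem~\ref{mainthrm}, for $n \in \dltd_k \setminus \{ m \in \NN : m < k \}$ one has $(a_k)_n \le (a_{\alpha_n})_n < c_n < (b_{\alpha_n})_n \le (b_k)_n$, so both $\{n : (a_k)_n < c_n\}$ and $\{n : c_n < (b_k)_n\}$ contain $\dltd_k \setminus \{ m : m < k\} \in \dltu$ and hence belong to $\dltu$; as in the earlier computation this yields $\overline{a_k} < \overline{c} < \overline{b_k}$ for every $k$, whence $\overline{c} \in \bigcap_{k \in \NN} \,]\overline{a_k},\overline{b_k}[\,$, establishing (b).
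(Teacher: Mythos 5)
Your proposal is correct and follows essentially the same route as the paper: the counterexample with constant sequences for (b)~$\Rightarrow$~(a), and for (a)~$\Rightarrow$~(b) the rerun of the Theorem~\ref{mainthrm} construction with $C_i$ redefined via strict inequality and $c_n$ chosen by density from the interior of the $\alpha_n$-th interval. In fact you supply details the paper leaves implicit (why $C_i \in \dltu$, and why $\alpha_n \in I_n$ guarantees $(a_{\alpha_n})_n < (b_{\alpha_n})_n$), and these checks are accurate.
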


\begin{proof}
To show (b) $\Longrightarrow$ (a) we explain the 
counterexample which was mentioned in Remark \ref{Rem01}. 
Let $p,q \in T$ such that
$p < q $ and there is no element of $T$ in the open interval $]p,q[$.
 
This means that the open interval 
$ \left] \overline{p},\overline{q} \right[ $ 
is also empty, where 
$\overline{p}$ and $\overline{q}$ are the classes of the 
constant sequences $(p_n)$ and $(q_n)$ 
defined by $ p_n = p $ and $ q_n = q $ 
for every $ n \in \NN \,$. 

Thus if 
$ p_k = (p_n) \in \dltt $ and $ q_k = (q_n) \in \dltt $ 
for every $k \in \NN$, then 
\[
\bigcap_{k \in \NN} 
\left] \overline{p_k} \,,\, \overline{q_k} \right[ 
= \left] \overline{p} , \overline{q} \right[ 
= \emptyset \,. 
\] 

To show the reverse implication, we can take the 
same process as we did in the proof of Theorem \ref{mainthrm}.
The only adjustments to be made are that 
we define 
\[ 
C_i = \{\, n \in \NN \,:\, (a_i)_n < (b_i)_n \,\} 
\quad (i \in \NN) 
\]
and 
$c_n$ has to be an element from the interior of 
the '$\alpha_n$-th' interval 
(obviously, it cannot be an endpoint as it initially was),  
that is, 
\[ 
c_n \in 
\left] (a_{\alpha_n})_n \,,\, (b_{\alpha_n})_n \right[ 
\ \ \mbox{if} \ \ n \in \dltd_1 \,, 
\ \ \mbox{while} \ \ 
c_n = (a_1)_1 \ \ \text{if} \ \ n \notin \dltd_1 \,.
\]  
Clearly, the required element $c_n$ exists 
as $T$ is dense everywhere.
We will not repeat the entire proof 
since every remaining step is analogous. 
\end{proof}

\subsection{Completeness of the extension}

Finally we will show that the operation $ { }^{\ast} $ 
does not preserve completeness in general. 
Moreover the completeness of the ultrapower  
depends only on the cardinality of the initial ordered set.

\begin{lem} \label{Lemma1}
Let $\dltu$ be an ultrafilter on $\NN$ 
and $ A_j \subset \NN $ $ \ (j = 1,\dots,n) $.  
If 
\[
\bigcup_{j=1}^{n} A_j = \NN \, \text{, then } 
\exists k \in \{ 1, \dots ,n \} \text{ such that } 
A_k \in \dltu
\]
\end{lem}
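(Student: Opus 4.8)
The plan is to argue by contradiction, using only the defining properties of an ultrafilter collected in Definition \ref{ultdef}. Suppose, contrary to the assertion, that none of the sets $A_1, \dots, A_n$ belongs to $\dltu$. Since $\dltu$ is an ultrafilter, property (4) applied to each $A_j$ then forces the complement $\NN \setminus A_j$ to lie in $\dltu$ for every $j = 1, \dots, n$. This is the step where the full ultrafilter axiom (as opposed to the mere filter axioms) is essential: a plain filter could contain neither $A_j$ nor its complement.

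Next I would intersect these complements. Property (2) ensures that $\dltu$ is closed under the intersection of two of its members, and a trivial induction extends this to any finite intersection, so that $\bigcap_{j=1}^{n} (\NN \setminus A_j) \in \dltu$. By De Morgan's law and the hypothesis $\bigcup_{j=1}^{n} A_j = \NN$,
\[
\bigcap_{j=1}^{n} (\NN \setminus A_j) = \NN \setminus \bigcup_{j=1}^{n} A_j = \NN \setminus \NN = \emptyset .
\]
Hence $\emptyset \in \dltu$, which contradicts property (3), since $\emptyset$ is finite rather than infinite. Therefore the assumption is untenable and some $A_k \in \dltu$, as claimed.

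There is no genuine obstacle in this argument; the only points deserving a word are the passage from the two-set form of (2) to the $n$-fold intersection, which I would merely note as an immediate induction, and the observation that the conclusion of (3) is being used here only through its consequence $\emptyset \notin \dltu$ (the weaker condition (3$'$) of the Remark). In fact, this makes clear that the hypothesis $\bigcup_j A_j = \NN$ enters solely to render the intersection of the complements empty: the very same reasoning proves the slightly stronger statement that whenever $\bigcup_{j=1}^{n} A_j \in \dltu$, at least one $A_k$ already lies in $\dltu$.
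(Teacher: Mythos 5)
Your argument is correct and coincides with the paper's own proof: both proceed by contradiction, use property (4) to place each complement $\NN \setminus A_j$ in $\dltu$, intersect them via property (2), and derive $\emptyset \in \dltu$ against property (3). Your closing observation that the hypothesis can be weakened to $\bigcup_j A_j \in \dltu$ is a valid (if unneeded) bonus.
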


\begin{proof}
Assume that for all indices 
$ j \in \{ 1, \dots ,n \} : A_j \notin \dltu$. 
From the definition of $\dltu$ we get 
$\NN \setminus A_j \in \dltu $ 
for every $j \in \{ 1, \dots ,n \}$. 
This means 
\[
\emptyset =\NN \setminus \NN = 
\NN \setminus \bigcup_{j=1}^{n} A_j = 
\bigcap_{j=1}^{n} \left( \NN \setminus A_j \right) \in \dltu
\]
which is an obvious contradiction as $\emptyset$ is not infinite. 
Therefore some $k \in \{ 1, \dots , n \}$ must exist 
for which $A_k \in \dltu $.
\end{proof}

\begin{thm}
Let $T$ be an ordered set. $T ^\ast$ 
is complete if and only if $T$ 
is finite.
\end{thm}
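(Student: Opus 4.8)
The plan is to prove the two implications separately, the infinite case being the substantial one.

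First I would dispose of the easy direction: if $T$ is finite then $T^\ast$ is complete. The key observation is that $T^\ast$ is in fact order-isomorphic to $T$. Given any sequence $(a_n) \in \dltt$, the sets $\{n \in \NN : a_n = t\}$ for $t \in T$ form a finite partition of $\NN$, so by Lemma \ref{Lemma1} one of them lies in $\dltu$; hence $(a_n)$ is equivalent to a constant sequence and every class of $T^\ast$ has the form $\overline{t}$. The map $t \mapsto \overline{t}$ is then an order isomorphism, since for fixed $s,t \in T$ the sets $\{n : s = t\}$ and $\{n : s \leq t\}$ are each either $\emptyset$ or $\NN$, which yields injectivity and the reflection of the ordering. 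As a finite linear order is trivially complete (every nonempty subset has a maximum, which serves as its least upper bound), so is $T^\ast$.

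The real work is the converse: assuming $T$ is infinite, I must exhibit a nonempty, bounded-above subset of $T^\ast$ with no least upper bound. The first step extracts structure from $T$: since $T$ is infinite it has a countably infinite subset, and colouring each pair of its elements according to whether the order increases or decreases and applying the infinite Ramsey theorem produces a strictly monotone sequence in $T$. Assume it is strictly increasing, $t_1 < t_2 < \cdots$ (the decreasing case is handled dually, replacing ``least upper bound'' by ``greatest lower bound'' via the equivalent completeness criterion recorded after the definition of completeness). I would then consider $S = \{\, \overline{t_k} : k \in \NN \,\}$ together with the diagonal element $d = \overline{(t_n)}$. Because all cofinite subsets of $\NN$ lie in $\dltu$ (their complements are finite), the identities $\{n : t_k \leq t_n\} = \{n \geq k\}$ and $\{n : t_k < t_n\} = \{n > k\}$ give $\overline{t_k} < d$ for every $k$, so $S$ is bounded above by $d$.

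The crux is to show that $S$ has no supremum, equivalently that every upper bound of $S$ dominates a strictly smaller one; this must be carried out without any density hypothesis on $T$, which is the main obstacle, since one cannot simply insert a new element below a candidate bound. Let $u = \overline{(u_n)}$ be an arbitrary upper bound. Since $\overline{t_k} < \overline{t_{k+1}} \leq u$, it is strict, so $B_k := \{n : t_k < u_n\} \in \dltu$ for every $k$. The idea is to replace the values $u_n$ by terms of the sequence $(t_j)$ itself: define $v_n = t_{\ell(n)}$, where $\ell(n) = \max\{\, j \leq n : t_j < u_n \,\}$ whenever this set is nonempty, and $v_n = t_1$ otherwise. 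On $B_1 \in \dltu$ the index $j=1$ witnesses that the set is nonempty and $v_n = t_{\ell(n)} < u_n$, so $v := \overline{(v_n)}$ satisfies $v < u$; and for each fixed $k$, on the $\dltu$-set $B_k \cap \{n \geq k\}$ the index $j = k$ gives $\ell(n) \geq k$, whence $t_k \leq t_{\ell(n)} = v_n$ by monotonicity, so $\overline{t_k} \leq v$. Thus $v$ is an upper bound of $S$ lying strictly below $u$, contradicting that $u$ is least. Hence $S$ has no least upper bound, $T^\ast$ is not complete, and the equivalence follows.
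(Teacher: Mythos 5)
Your proof is correct and follows essentially the same route as the paper: extract a strictly monotone sequence from the infinite $T$, show that the set of its constant classes is bounded above by the diagonal element yet admits no least upper bound, and in the finite case use Lemma \ref{Lemma1} to see that $T^\ast$ consists only of constant classes. The one point of divergence is how the strictly smaller upper bound is produced: you exploit the strictness of $\overline{t_k} < u$ to set $v_n = t_{\ell(n)}$ with $\ell(n) = \max\{\, j \le n : t_j < u_n \,\}$, whereas the paper works with the non-strict sets $\{n : b_n \ge t_k\}$ and must pass to $t_{\lfloor \alpha_n/2 \rfloor}$ to force strictness --- your variant is a mild simplification of the same idea.
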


\begin{proof}
In the first place we prove that if $T$ is infinite then 
$T ^\ast$ is not complete.
We will use the following basic fact:
in an infinite ordered set there exists 
a strictly monotone sequence of elements. 
In order to prove this, we may consider 
an obviously existing injective sequence 
$ (x_n) \colon \NN \rightarrow T $ 
(i.e., $ x_n \neq x_m $ if $ n \neq m $). 
It is a well-known fact 
that every sequence in an ordered set 
contains a monotone subsequence 
(we can apply the proof for real sequences \cite{NP88} 
in this more general context as well). 
Clearly, such a monotone subsequence of $(x_n)$ 
is strictly monotone. 

We give the details of the proof only 
for the case of a strictly increasing sequence.

Let $ t_1 < t_2 < t_3 < \dots $ 
be a strictly increasing sequence of 
elements in $T$. 
It is easy to see that the equivalence classes of 
the constant sequences  
\[ 
(s_k)_n = t_k \qquad (n \in \NN) \ (k \in \NN) 
\] 
generate a subset 
\[ 
S = \{\, \overline{s_k} \,\mid\, k \in \NN \,\} 
\] 
of $T ^\ast$ 
which is bounded from above. 
Indeed, one can easily check that 
$ \overline{(t_n)} $ is an upper bound of $ S \,$.
Now we demonstrate that $S$ 
has no least upper bound. 
Let $ (b_n) \in \dltt $ such that 
$ \overline{(b_n)} $ is an upper bound of $ S \,$. 
We define some sets in 
a similar manner as we did in the proof of 
Theorem \ref{mainthrm}: let 
\[
\dltd_k = \{ n \in \NN \ : \ b_n \geq t_k \} \in \dltu, 
\qquad
I_n = \{ k \in \NN \ : \ b_n \geq t_k \} 
\qquad (k,n \in \NN).
\]
We should note that if $k \in I_n$ then 
$l \in I_n$ for every natural number $l \leq k$. 
Another easy observation is that,  
for any $ m,k \in \NN \,$, 
$ m \in \dltd_k $ if and only if 
$ k \in I_m \,$.  
(we will use these two remarks later on).

Now we can define a mapping 
$ \alpha : \NN \rightarrow \NN \cup \{ 0 \} $  
as follows: let 
\[
\alpha_n =
\begin{cases}
0, & \text{if } I_n = \emptyset \,, \\
n, &  \text{if } I_n \text{ has no upper bound}, \\
\min \{ n, \max I_n \}, &  \text{otherwise}.
\end{cases}
\]
With the notation 
$ \beta_n = \left\lfloor \displaystyle 
\frac{\alpha_n}{2} \right\rfloor $ 
it is possible to construct 
an upper bound for $S$ which is smaller than $\overline{b}$ 
(here $\lfloor \ \rfloor$ denotes the floor, i.e., 
$ \lfloor x \rfloor = 
\max \{\, z \in \ZZ \,:\, z \leq x \,\} $).

We define 
$(c_n) \in \dltt $ as follows:
\[
c_n=
\begin{cases}
b_n, & \text{if } \alpha_n < 2 \\
t_{\beta_n}, &  \text{if } \alpha_n \geq 2
\end{cases}
\]
For any natural number $k$ 
the following argumentation can be made: 
if $ \alpha_n \geq 2k $ then $\beta_n \geq k $ 
and therefore $c_n \geq t_k$.
Since 
\[
\{n \in \NN \ : \ c_n \geq t_k \} \supset  
\{n \in \NN \ : \ \alpha_n \geq 2k \} = 
\dltd_{2k} \setminus \{m \in \NN \ : \ m < 2k \} \in \dltu
\]
follows from the two simple remarks that were stated earlier, 
we have obtained that $\overline{(c_n)}$ is 
an upper bound of $S$. 
On the other hand, for every 
$ m \in \dltd_2 \setminus \{ 1 \} \,$, 
the value $c_m$ is indeed smaller than $b_m$, because 
$ 2 \leq {\alpha}_m \in I_m \,$, and thus 
\[
c_m = t_{\beta_m} < t_{\alpha_m} \leq b_m,
\]
so $\overline{(c_n)} < \overline{(b_n)}$. 
Therefore $S$ has no least upper bound. 

With some obvious adjustments it can be shown that if 
$ u_1 > u_2 > u_3 > \dots $ 
is a strictly decreasing sequence of elements in $T$ 
and $ v_k \in \dltt $ such that $ (v_k)_n = u_k $ 
for all $ n,k \in \NN \,$,  
then the set  
\[
V = \{ \ \overline{v_k} \,\mid\, k \in \NN \,\} 
\subset T^\ast
\]
does not have a greatest lower bound.

In the second part of the proof 
we will verify the reverse implication, 
namely that if $T$ is finite then $T ^\ast$ is complete. 
Since a finite ordered set is always complete, 
it is sufficient to show that, 
for any finite ordered set $ T \,$, 
$ T^{\ast} $ is finite as well. 

Let $ k \in \NN \,$, 
$ T = \{\, t_1 \,,\, \dots \,,\, t_k \,\} $, 
and for each $ j \in \{ 1,\dots,k \} $, 
let $ s_j \in \dltt $ such that 
$ (s_j)_n = t_j $ for all $ n \in \NN $  
(a constant sequence). 
Now let us consider an arbitrary sequence 
$ (a_n) \in \dltt \,$. 
For every $ j \in \{ 1,\dots,k \} $ 
we define the sets 
$ A_j = \{\, n \in \NN \,:\, a_n = t_j \,\} $. 
Obviously,  $ \bigcup_{j=1}^{k} A_j = \NN \,$. 
According to Lemma \ref{Lemma1},  
there exists an index 
$ m \in \{ 1,\dots,k \} $ such that 
$ A_m \in \dltu $ and therefore 
$ \overline{(a_n)} = \overline{s_m} $ 
(i.e., $(a_n)$ is the equivalent with 
the constant $ t_m $ sequence). 
So we may conclude that $T ^\ast$ contains only the
equivalence classes of finitely many constant sequences, 
which implies that $ T^{\ast}$ is complete as well.
\end{proof}

\end{document}